\theoremstyle{plain}
\newtheorem{theorem}{Theorem}
\newtheorem{lemma}[theorem]{Lemma}
\newtheorem{corollary}[theorem]{Corollary}
\theoremstyle{remark}
\newtheorem{example}[theorem]{Example}
\newtheorem*{acknowledgment}{Acknowledgment}
\newcommand{\thmlabel}[1]{\label{thm:#1}}   
\newcommand{\lemlabel}[1]{\label{lem:#1}}   
\newcommand{\corlabel}[1]{\label{cor:#1}}   
\newcommand{\eqnlabel}[1]{\label{eqn:#1}}   
\newcommand{\thmref}[1]{\ref{thm:#1}}   
\newcommand{\lemref}[1]{\ref{lem:#1}}   
\newcommand{\eqnref}[1]{\eqref{eqn:#1}} 
\newcommand{\ldiv}{\backslash}
\newcommand{\rdiv}{/}
\newcommand{\inv}{^{-1}}
\newcommand{\AIP}{\textsc{AIP}}
\newcommand{\RIP}{\textsc{RIP}}
\newcommand{\WCIP}{\textsc{WCIP}}
\newcommand{\Sym}{\mathrm{Sym}}
\newcommand{\Atp}{\mathrm{Atp}}
\title{Pseudoautomorphisms of Bruck loops and their generalizations}
\author[M. Greer]{Mark Greer}
\email{\url{mark.greer@du.edu}}
\author[M. Kinyon]{Michael Kinyon}
\email{\url{mkinyon@du.edu}}
\address{Department of Mathematics\\
2360 S Gaylord St \\
University of Denver\\
Denver CO 80208 USA }
\subjclass[2010]{20N05}
\keywords{pseudoautomorphism, Bruck loop, weak commutative inverse property}
\begin{document}

\begin{abstract}
We show that in a weak commutative inverse property loop, such as a Bruck loop, if $\alpha$ is a right [left] pseudoautomorphism with companion $c$, then $c$ [$c^2$] must lie in the left nucleus. In particular, for any such loop with trivial left nucleus, every right pseudoautomorphism is an automorphism and if the squaring map is a permutation, then every left pseudoautomorphism is an automorphism as well. We also show that every pseudoautomorphism of a commutative inverse property loop is an automorphism, generalizing a well-known result of Bruck.
\end{abstract}

\maketitle


A \emph{loop} $(Q,\cdot)$ consists of a set $Q$ with a binary operation $\cdot : Q\times Q\to Q$ such that (i) for all $a,b\in Q$, the equations $ax = b$ and $ya = b$ have unique solutions $x,y\in Q$, and (ii) there exists $1\in Q$ such that $1x = x1 = x$ for all $x\in Q$. We denote these unique solutions by $x = a\ldiv b$ and $y = b\rdiv a$, respectively.
For $x\in Q$, define the \emph{right} and \emph{left translations} by $x$ by, respectively, $y R_x = yx$ and $yL_x = xy$ for all $y\in Q$. That these mappings are permutations of $Q$ is essentially part of the definition of loop. Standard reference in loop theory are \cite{Br, Pf}.

A triple $(\alpha,\beta,\gamma)$ of permutations of a loop $Q$ is an \emph{autotopism} if for all $x,y\in Q$, $x\alpha\cdot y\beta = (xy)\gamma$. The set $\Atp(Q)$ of all autotopisms of $Q$ is a group under composition. Of particular interest here are the three subgroups
\begin{align*}
\Atp_{\lambda}(Q) &= \{(\alpha,\beta,\gamma)\in \Atp(Q)\mid 1\beta = 1\}\,, \\
\Atp_{\mu}(Q) &= \{(\alpha,\beta,\gamma)\in \Atp(Q)\mid 1\gamma = 1\}\,, \\
\Atp_{\rho}(Q) &= \{(\alpha,\beta,\gamma)\in \Atp(Q)\mid 1\alpha = 1\}\,.
\end{align*}
For instance, say, $(\alpha,\beta,\gamma)\in \Atp_{\lambda}(Q)$. For all $x\in Q$, $x\alpha = x\alpha\cdot 1 = x\alpha\cdot 1\beta = (x1)\gamma = x\gamma$. Thus $\alpha = \gamma$. Set $a = 1\alpha$. For all $x\in Q$, $x\alpha = (1x)\alpha = 1\alpha\cdot x\beta = a\cdot x\beta$ Thus $\alpha = \beta L_a$, and so every element of $\Atp_{\lambda}(Q)$ has the form $(\beta L_a,\beta,\beta L_a)$ for some $a\in Q$. Conversely, it is easy to see that if a triple of permutations of that form is an autotopism, then $1\beta = 1$.

By similar arguments for the other two cases, we have the following characterizations:
\begin{align*}
\Atp_{\lambda}(Q) &= \Atp(Q)\cap \{(\beta L_a,\beta,\beta L_a)\mid \beta\in \Sym(Q), a\in Q \}\,, \\
\Atp_{\mu}(Q) &= \Atp(Q)\cap \{(\gamma R_{c\ldiv 1}\inv ,\gamma L_c\inv,\gamma)\mid \gamma\in \Sym(Q), c\in Q\}\,, \\
\Atp_{\rho}(Q) &= \Atp(Q)\cap \{(\alpha,\alpha R_b,\alpha R_b)\mid \alpha\in \Sym(Q), b\in Q \}\,.
\end{align*}
Since these special types of autotopisms are entirely determined by a single permutation and an element of the loop, it is customary to focus on those instead of on the autotopisms themselves. This motivates the following definitions.

Let $Q$ be a loop. If $\beta\in \Sym(Q)$ and $a\in Q$ satisfy
\begin{equation}
\eqnlabel{leftpseudo}
a\cdot (xy)\beta = (a\cdot x\beta)(y\beta)
\end{equation}
for all $x,y\in Q$, then $\beta$ is called a \emph{left pseudoautomorphism} with \emph{companion} $a$. If $\gamma\in \Sym(Q)$ and $c\in Q$ satisfy
\begin{equation}
\eqnlabel{middlepseudo}
(xy)\gamma = [(x\gamma)\rdiv (c\ldiv 1)][c\ldiv (y\gamma)]
\end{equation}
for all $x,y\in Q$, then $\gamma$ is called a \emph{middle pseudoautomorphism} with \emph{companion} $c$. Finally, if $\alpha\in \Sym(Q)$ and $b\in Q$ satisfy
\begin{equation}
\eqnlabel{rightpseudo}
(xy)\alpha\cdot b = (x\alpha)(y\alpha\cdot b)
\end{equation}
for all $x,y\in Q$, then $\alpha$ is called a \emph{right pseudoautomorphism} with \emph{companion} $b$.

Pseudoautomorphisms can also be viewed as isomorphisms between loop isotopes where the isotopy is determined by the companion. Since this perspective will not play a role in what follows, we leave the details to the literature \cite{Br}.

There are some specializations of the notion of pseudoautomorphism worth mentioning explicitly. First, recall that the \emph{left}, \emph{middle} and \emph{right nucleus} of a loop $Q$ are the sets
\begin{align*}
N_{\lambda}(Q) &= \{ a\in Q\mid ax\cdot y = a\cdot xy,\ \forall x,y\in Q\}\,,\\
N_{\mu}(Q) &= \{ c\in Q\mid xc\cdot y = x\cdot cy,\ \forall x,y\in Q\}\,,\\
N_{\rho}(Q) &= \{ b\in Q\mid xy\cdot b = x\cdot yb,\ \forall x,y\in Q\}\,,
\end{align*}
respectively.

We denote the identity mapping on $Q$ by $\iota$.

\begin{lemma}
\lemlabel{nuclei}
Let $Q$ be a loop. The nuclei are characterized as follows:
\begin{align*}
N_{\lambda}(Q) &= \{ a\in Q\mid (\iota L_a, \iota, \iota L_a)\in \Atp(Q)\} \\
&= \{ a\in Q\mid \iota\text{ is a left pseudoautomorphism with companion } a\}\,,\\
N_{\mu}(Q) &= \{ c\in Q\mid (\iota R_c, \iota L_c\inv, \iota)\in \Atp(Q)\} \\
&= \{ c\in Q\mid \iota\text{ is a middle pseudoautomorphism with companion } c\}\,,\\
N_{\rho}(Q) &= \{ b\in Q\mid (\iota , \iota R_b, \iota R_b)\in \Atp(Q)\} \\
&= \{ b\in Q\mid \iota\text{ is a right pseudoautomorphism with companion } b\}\,.
\end{align*}
\end{lemma}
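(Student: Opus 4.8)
The plan is to prove all six equalities by substituting the identity permutation $\iota$ into the defining conditions and reducing each to the corresponding nuclear identity. The organizing observation, already latent in the parametrizations of $\Atp_{\lambda}(Q)$, $\Atp_{\mu}(Q)$, $\Atp_{\rho}(Q)$, is that comparing the autotopism equation $x\alpha\cdot y\beta = (xy)\gamma$ with \eqnref{leftpseudo}, \eqnref{middlepseudo}, \eqnref{rightpseudo} shows directly that $\beta$ is a left pseudoautomorphism with companion $a$ exactly when $(\beta L_a,\beta,\beta L_a)\in\Atp(Q)$, that $\alpha$ is a right pseudoautomorphism with companion $b$ exactly when $(\alpha,\alpha R_b,\alpha R_b)\in\Atp(Q)$, and that $\gamma$ is a middle pseudoautomorphism with companion $c$ exactly when $(\gamma R_{c\ldiv 1}\inv,\gamma L_c\inv,\gamma)\in\Atp(Q)$. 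Putting the free permutation equal to $\iota$ then links each pseudoautomorphism form to an autotopism form, and it remains only to identify that autotopism form with membership in the relevant nucleus.

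For the left and right nuclei I would simply expand. The triple $(\iota L_a,\iota,\iota L_a)$ is an autotopism iff $(ax)y = a(xy)$ for all $x,y$, which is verbatim the definition of $a\in N_{\lambda}(Q)$; and setting $\beta=\iota$ in \eqnref{leftpseudo} gives $a\cdot xy = (ax)y$, the same identity. Dually, $(\iota,\iota R_b,\iota R_b)\in\Atp(Q)$ iff $x(yb)=(xy)b$, which is $b\in N_{\rho}(Q)$, and \eqnref{rightpseudo} with $\alpha=\iota$ yields exactly this. So these four equalities are immediate one-line verifications.

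The middle nucleus is the only case requiring care, and I expect it to be the main obstacle because of the right-division term. Expanding $(\iota R_c,\iota L_c\inv,\iota)\in\Atp(Q)$ gives $(xc)(c\ldiv y)=xy$; substituting $y\mapsto cy$ and using $c\ldiv(cy)=y$ converts this into $xc\cdot y = x\cdot cy$, that is, $c\in N_{\mu}(Q)$, and since $L_c$ is a bijection the substitution is reversible. For the pseudoautomorphism form I would put $\gamma=\iota$ in \eqnref{middlepseudo} to get $xy = [x\rdiv(c\ldiv 1)][c\ldiv y]$, again substitute $y\mapsto cy$ to obtain $[x\rdiv(c\ldiv 1)]\cdot y = x\cdot cy$, and then specialize to $y=1$, which forces $x\rdiv(c\ldiv 1) = xc$ for all $x$; with this simplification the identity collapses to $xc\cdot y = x\cdot cy$, giving $c\in N_{\mu}(Q)$. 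For the converse, if $c\in N_{\mu}(Q)$ then $c(c\ldiv 1)=1$ and the nuclear identity yields $(xc)(c\ldiv 1) = x\cdot c(c\ldiv 1) = x$, so $xc = x\rdiv(c\ldiv 1)$, after which \eqnref{middlepseudo} with $\gamma=\iota$ reduces once more to the nuclear identity. This last computation also reconciles the two autotopism presentations, since it shows $R_{c\ldiv 1}\inv = R_c$ precisely on $N_{\mu}(Q)$.
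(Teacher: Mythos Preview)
Your proof is correct and follows essentially the same route as the paper: the left and right cases are dismissed as immediate, and for the middle nucleus you use exactly the paper's substitution $y\mapsto cy$ followed by the specialization $y=1$ to extract $x\rdiv(c\ldiv 1)=xc$. You are more explicit than the paper in treating the autotopism form $(\iota R_c,\iota L_c\inv,\iota)$ separately and in spelling out the converse direction, both of which the paper leaves implicit.
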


\begin{proof}
Perhaps the only claim which is not immediately obvious is the characterization of the middle nucleus. Suppose $\iota$ is a middle pseudoautomorphism with companion $c$. Then for all $x,y\in Q$, $xy = [x \rdiv (c\ldiv 1)][c\ldiv y]$. Replace $y$ with $cy$ to get $x\cdot cy = [x \rdiv (c\ldiv 1)] y$. Set $y = 1$ so that $xc = x \rdiv (c\ldiv 1)$. Thus $x\cdot cy = xc\cdot y$, that is, $c\in N_{\mu}(Q)$. The reverse inclusion is similarly straightforward.
\end{proof}

Note that all three of the nuclei are subloops. This can be proved directly from their definitions, but perhaps the easiest proof uses the autotopic characterization of Lemma \lemref{nuclei}.

A permutation $\sigma$ of a loop $Q$ is an \emph{automorphism} of $Q$ if $(xy)\sigma = (x\sigma )(y\sigma )$ for all $x,y\in Q$. Observe that a permutation $\sigma$ is an automorphism if and only if it is a pseudoautomorphism of any of the three types with companion $1$. The following is also clear from Lemma \lemref{nuclei}.

\begin{lemma}
\lemlabel{ps-aut}
Let $Q$ be a loop.
If $\sigma\in \Sym(Q)$ is a left [middle, right] pseudoautomorphism with companion $c\in Q$ then $\sigma$ is an automorphism if and only if $c\in N_{\lambda}(Q)$ [$N_{\mu}(Q)$, $N_{\rho}(Q)$].
\end{lemma}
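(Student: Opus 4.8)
The plan is to translate every condition in the statement into membership in the autotopism group $\Atp(Q)$ and then exploit the fact that $\Atp(Q)$ is a group. Reading off the three displayed characterizations of $\Atp_{\lambda}(Q)$, $\Atp_{\mu}(Q)$, $\Atp_{\rho}(Q)$, the hypothesis that $\sigma$ is a left, middle, or right pseudoautomorphism with companion $c$ is precisely the assertion that one of
$$P_\lambda=(\sigma L_c,\sigma,\sigma L_c),\qquad P_\mu=(\sigma R_{c\ldiv 1}\inv,\sigma L_c\inv,\sigma),\qquad P_\rho=(\sigma,\sigma R_c,\sigma R_c)$$
lies in $\Atp(Q)$. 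Likewise, $\sigma$ is an automorphism exactly when $A:=(\sigma,\sigma,\sigma)\in\Atp(Q)$, and by Lemma \lemref{nuclei} the condition $c\in N_{\lambda}(Q)$ [$N_{\mu}(Q)$, $N_{\rho}(Q)$] is exactly that the corresponding triple
$$I_\lambda=(\iota L_c,\iota,\iota L_c),\qquad I_\mu=(\iota R_{c\ldiv 1}\inv,\iota L_c\inv,\iota),\qquad I_\rho=(\iota,\iota R_c,\iota R_c)$$
(that is, ``$\iota$ is a pseudoautomorphism of the given type with companion $c$'') lies in $\Atp(Q)$.

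The key observation I would use is that in each of the three cases one has the identity of permutation triples
$$P_t = A\cdot I_t \qquad (t\in\{\lambda,\mu,\rho\})\,,$$
which follows immediately by composing componentwise, since $\sigma\iota=\iota\sigma=\sigma$ and the remaining factors match. For instance, in the left case $A\cdot I_\lambda=(\sigma L_c,\sigma,\sigma L_c)=P_\lambda$. This is a genuine equality of triples of permutations and does not yet require anything to be an autotopism.

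From here the argument is pure group theory in $\Atp(Q)$. Fixing the type $t$ matching the hypothesis, so that $P_t\in\Atp(Q)$, and using that $\Atp(Q)$ is a group, I have $A=P_t\,I_t\inv$ and $I_t=A\inv P_t$; hence $A\in\Atp(Q)$ if and only if $I_t\in\Atp(Q)$. Translating back through Lemma \lemref{nuclei}, $\sigma$ is an automorphism if and only if $c$ lies in the corresponding nucleus, which is the claim.

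The only point requiring care is the middle case, where the first component of $P_\mu$ is $\sigma R_{c\ldiv 1}\inv$ rather than $\sigma R_c$. I would resist substituting the nuclear translation $R_c$ directly; instead I use $I_\mu=(\iota R_{c\ldiv 1}\inv,\iota L_c\inv,\iota)$ --- exactly the ``$\iota$ is a middle pseudoautomorphism with companion $c$'' triple --- so that the identity $P_\mu=A\cdot I_\mu$ holds on the nose, and then invoke the second characterization of $N_{\mu}(Q)$ in Lemma \lemref{nuclei} to identify $I_\mu\in\Atp(Q)$ with $c\in N_{\mu}(Q)$. With this choice all three cases are handled identically, and I expect no genuine obstacle beyond bookkeeping the composition conventions.
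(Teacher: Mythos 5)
Your proof is correct and takes essentially the paper's approach: the paper offers no written proof, asserting the lemma is ``clear from Lemma~\ref{lem:nuclei},'' and your factorization $P_t = A\cdot I_t$ inside the group $\Atp(Q)$ is precisely the natural way to make that assertion rigorous, including the correct handling of the middle case via the companion-of-$\iota$ characterization of $N_{\mu}(Q)$.
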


\medskip

A loop $Q$ is said to be a (right) \emph{Bruck loop} if it satisfies the \emph{Bol identity} $[(xy)z]y = x[(yz)y]$ for all $x,y,z\in Q$ and the the \emph{automorphic inverse property} (AIP):
\[
(xy)\inv = x\inv y\inv  \tag{\AIP}
\]
for all $x,y\in Q$. (Bruck loops have also been called ``K-loops'' \cite{Kiechle} or
``gyrocommutative gyrogroups'' \cite{Ungar}. Note that much of the literature works with the dual notion of \emph{left} Bruck loop.) In a Bruck loop $Q$, inverses are two-sided, that is, $1\rdiv x = x\ldiv 1 = x\inv$, and the \emph{right inverse property} (RIP) holds:
\[
xy\cdot y\inv = x \qquad\text{or equivalently}\qquad R_y\inv = R_{y\inv}  \tag{\RIP}
\]
for all $x,y\in Q$. Bruck loops have been intensively studied in recent years \cite{Asch2,AKP,BS,BSS,BS2,Gl1,Kiechle,Nagy2}.

The interest in Bruck loops is partly because they are a naturally occurring class.
As an example, consider the set $S_n^+(\mathbb{R})$ of all $n\times n$ positive definite, symmetric matrices. By the polar decomposition, the product $AB$ of two such matrices decomposes uniquely as $AB = UP$ where $U$ is an orthogonal matrix and $P\in S_n^+(\mathbb{R})$. Define $A\odot B = P$. Then it is straightforward to show that $(S_n^+(\mathbb{R}),\odot)$ is a Bruck loop (see, \emph{e.g.}, \cite{Kiechle}).

Bruck loops are the motivation for our main result below, but we will state and prove it in much more generality (hence the generalizations mentioned in the title). The class of loops we will consider are those with two-sided inverses such that the following identity holds:
\[
(xy)\inv y = x\inv  \tag{\WCIP}
\]
for all $x,y$. These were introduced by Johnson and Sharma \cite{JS} who called them \emph{weak commutative inverse property loops} (WCIP loops). It is clear that any loop with the RIP and AIP satisfies WCIP. This applies in particular to Bruck loops or even to the more general class of Kikkawa loops \cite{Kiechle}. In fact, it is evident that any two of the properties RIP, AIP and WCIP imply the third.

\begin{lemma}
\lemlabel{wcip}
A loop $Q$ has the WCIP if and only if for all $x,y\in Q$,
\[
y\inv \ldiv x\inv = x\ldiv y\,.  \tag{{\WCIP}2}
\]
\end{lemma}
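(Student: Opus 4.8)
The plan is to exploit the defining property of left division: for all $x,y\in Q$, the element $x\ldiv y$ is by definition the unique $z\in Q$ with $xz = y$. The whole argument then reduces to translating between the equational form $(xy)\inv y = x\inv$ of the \WCIP{} and the division form $y\inv\ldiv x\inv = x\ldiv y$ of (\WCIP2), each of which encodes the same implication ``$xz = y \Rightarrow y\inv z = x\inv$.''

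For the forward direction, I would fix $x,y\in Q$ and set $z = x\ldiv y$, so that $xz = y$. Applying the \WCIP{} to the pair $(x,z)$ gives $(xz)\inv z = x\inv$, and since $xz = y$ this reads $y\inv z = x\inv$. By the definition of left division this says precisely $z = y\inv\ldiv x\inv$, whence $x\ldiv y = y\inv\ldiv x\inv$. As $x,y$ were arbitrary, (\WCIP2) follows.

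For the converse, I would fix $x,y$ and note that $y = x\ldiv(xy)$, since $x\cdot y = xy$. Instantiating (\WCIP2) at the pair $(x,\,xy)$ gives $x\ldiv(xy) = (xy)\inv\ldiv x\inv$, hence $y = (xy)\inv\ldiv x\inv$; unwinding the left division, this means $(xy)\inv y = x\inv$, which is the \WCIP.

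I do not anticipate a genuine obstacle: each direction is a direct substitution into the other identity, so there is no computation to grind through. The only points requiring care are notational, namely correctly reading $y\inv\ldiv x\inv$ as the unique solution $z$ of $y\inv z = x\inv$, and using that inverses are two-sided (as is assumed throughout for \WCIP{} loops) so that the symbol $x\inv$ is unambiguous.
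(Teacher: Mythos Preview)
Your proof is correct and is essentially identical to the paper's: both directions use the same substitutions (replace $y$ by $x\ldiv y$ in (\WCIP) for the forward direction, and replace $y$ by $xy$ in ({\WCIP}2) for the converse), just phrased slightly more verbosely via the auxiliary variable $z$.
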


\begin{proof}
Replacing $y$ in (\WCIP) with $x\ldiv y$ and rearranging, we obtain ({\WCIP}2). Replacing $y$ in ({\WCIP}2) with $xy$ and rearranging, we obtain (\WCIP).
\end{proof}

In particular, Lemma \lemref{wcip} shows that a loop $Q$ has the WCIP if and only if the isotrophic loop \cite{Pf} $(Q,\circ)$ defined by $x\circ y = x\inv \ldiv y$ is commutative.

Before turning to our main result, we will show that in the present setting we can dispense with the notion of middle pseudoautomorphism. In a loop $Q$ with two-sided inverses, we will denote the inversion map by $J : Q\to Q; x\mapsto x\inv$.

\begin{lemma}
\lemlabel{atp-wcip}
Let $Q$ be loop with WCIP. If $(\alpha,\beta,\gamma)\in \Atp(Q)$, then $(J\gamma J, \beta, J\alpha J)\in \Atp(Q)$.
\end{lemma}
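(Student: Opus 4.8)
The plan is to verify the autotopism identity for the proposed triple directly. Since $J$ is the inversion involution and maps act on the right, the claim $x(J\gamma J)\cdot y\beta = (xy)(J\alpha J)$ unwinds to
\[
(x\inv\gamma)\inv\cdot y\beta = ((xy)\inv\alpha)\inv
\]
for all $x,y\in Q$. I would obtain this by starting from the hypothesis $u\alpha\cdot v\beta = (uv)\gamma$ and feeding it arguments chosen so that \WCIP\ collapses the product on the right.

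First I would substitute $u = (xy)\inv$ and $v = y$ into the autotopism identity, giving $(xy)\inv\alpha\cdot y\beta = \bigl((xy)\inv y\bigr)\gamma$. By \WCIP\ we have $(xy)\inv y = x\inv$, so the right-hand side is simply $x\inv\gamma$, leaving the intermediate identity $(xy)\inv\alpha\cdot y\beta = x\inv\gamma$.

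Next I would apply $J$ to both sides and invoke \WCIP\ a second time, now in the rearranged form $(st)\inv = s\inv\rdiv t$ (equivalently $(st)\inv\cdot t = s\inv$), with $s = (xy)\inv\alpha$ and $t = y\beta$. This inverts the left product and isolates the factor $y\beta$, yielding $((xy)\inv\alpha)\inv\rdiv(y\beta) = (x\inv\gamma)\inv$; multiplying on the right by $y\beta$ gives exactly the displayed identity, completing the verification.

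The only genuine obstacle is that $Q$ enjoys neither the full inverse property nor any Moufang-type flexibility, so products cannot be inverted freely; \WCIP\ is precisely the structural fact that permits inverting a product while cancelling its right factor, and the content of the lemma is that the two cancellations above line up. For a more conceptual rendering I would instead pass to translation operators: the autotopism hypothesis is equivalent to $\alpha R_{y\beta} = R_y\gamma$ for all $y$, while \WCIP\ is equivalent to $R_zJR_z = J$, that is, $JR_zJ = R_z\inv$. Moving the outer factors of $J$ across the translations via $JR_{y\beta} = R_{y\beta}\inv J$ and $R_yJ = JR_y\inv$ and then cancelling the (involutive) $J$'s, the target $(J\gamma J)R_{y\beta} = R_y(J\alpha J)$ reduces precisely to $\alpha R_{y\beta} = R_y\gamma$, the given condition.
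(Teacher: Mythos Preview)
Your proposal is correct and follows essentially the same line as the paper's proof: both arguments apply \WCIP\ twice, once to simplify $((xy)\inv y)\gamma$ to $x\inv\gamma$ and once (in the form $(st)\inv t = s\inv$) to invert the product $s\cdot t = (xy)\inv\alpha\cdot y\beta$; the paper merely performs these two steps in the opposite order. Your translation-operator reformulation, encoding \WCIP\ as $JR_zJ = R_z\inv$ and conjugating the autotopism identity $\alpha R_{y\beta} = R_y\gamma$ by $J$, is a clean repackaging of the same computation.
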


\begin{proof}
Since $(\alpha,\beta,\gamma)\in \Atp(Q)$, we have $x\alpha\cdot y\beta = (xy)\gamma$ for all $x,y\in Q$. Thus $(xy)\gamma J\cdot y\beta = (x\alpha\cdot y\beta)J\cdot y\beta = x\alpha J$ using the WCIP. Replace $x$ with $(xy)\inv$ and use the WCIP again to get $xJ\gamma J\cdot y\beta = (xy)J\alpha J$ for all $x,y\in Q$. Thus $(J\gamma J, \beta, J\alpha J)\in \Atp(Q)$.
\end{proof}

\begin{lemma}
\lemlabel{middle=right}
Let $Q$ be a loop with WCIP and let $\sigma\in \Sym(Q)$. Then $\sigma$ is a middle pseudoautomorphism with companion $c$ if and only if $J \sigma J$ is a right pseudoautomorphism with companion $c\inv$.
\end{lemma}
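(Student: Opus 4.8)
The plan is to pass to autotopisms and exploit Lemma~\lemref{atp-wcip}. From the description of $\Atp_{\mu}(Q)$ recorded before Lemma~\lemref{nuclei}, $\sigma$ is a middle pseudoautomorphism with companion $c$ precisely when
\[
M := (\sigma R_{c\ldiv 1}\inv, \sigma L_c\inv, \sigma)\in\Atp(Q),
\]
and from the description of $\Atp_{\rho}(Q)$, the map $J\sigma J$ is a right pseudoautomorphism with companion $c\inv$ precisely when
\[
R := (J\sigma J, (J\sigma J)R_{c\inv}, (J\sigma J)R_{c\inv})\in\Atp(Q).
\]
Since two-sided inverses give $c\ldiv 1 = c\inv$, it suffices to show that $M\in\Atp(Q)$ if and only if $R\in\Atp(Q)$. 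Throughout I will use the elementary identities valid for any autotopism $(\phi,\psi,\chi)$, namely $\chi = \phi R_{1\psi} = \psi L_{1\phi}$, obtained by setting $y=1$ and then $x=1$ in $x\phi\cdot y\psi=(xy)\chi$.

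First I would establish the forward implication. Assuming $M\in\Atp(Q)$, applying Lemma~\lemref{atp-wcip} produces $M' := (J\sigma J, \sigma L_c\inv, J\sigma R_{c\ldiv 1}\inv J)\in\Atp(Q)$, and it remains to recognize $M'$ as $R$. The middle pseudoautomorphism property forces $1\sigma = 1$ (put $y=1$ in \eqnref{middlepseudo} and use that $\sigma$ is a bijection, just as for the $\lambda$-autotopisms above); hence $1(J\sigma J)=1$ and $1(\sigma L_c\inv)=c\ldiv 1 = c\inv$. Writing $M'=(\phi,\psi,\chi)$ with $\phi=J\sigma J$ and $\psi=\sigma L_c\inv$, the relation $\chi=\psi L_{1\phi}=\psi L_1=\psi$ collapses the second and third components, while $\chi=\phi R_{1\psi}=(J\sigma J)R_{c\inv}$ identifies them with companion $c\inv$. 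Thus $M'=R$, so $R\in\Atp(Q)$.

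The converse is the mirror image, since Lemma~\lemref{atp-wcip} is an involution. Starting from $R\in\Atp(Q)$, Lemma~\lemref{atp-wcip} returns the triple $(J(J\sigma J)R_{c\inv}J, (J\sigma J)R_{c\inv}, \sigma)$, whose last entry is already $\sigma$. Here the right pseudoautomorphism property forces $1(J\sigma J)=1$ (put $x=1$ in \eqnref{rightpseudo}), equivalently $1\sigma=1$; computing $1(J(J\sigma J)R_{c\inv}J)=c$ and $1((J\sigma J)R_{c\inv})=c\inv$ and feeding these into $\sigma=\psi L_{1\phi}=\phi R_{1\psi}$ identifies the first two entries as $\sigma R_{c\inv}\inv=\sigma R_{c\ldiv 1}\inv$ and $\sigma L_c\inv$, so the triple is exactly $M$. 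The only real difficulty is bookkeeping: Lemma~\lemref{atp-wcip} never hands back a triple already in normalized pseudoautomorphism form, so each direction requires massaging its output using the generic autotopism identities together with $1\sigma=1$, $1J=1$, and $c\ldiv 1 = c\inv$. Keeping the right-action composition order straight is where care is needed, but no step is deep.
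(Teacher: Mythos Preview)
Your proof is correct and follows essentially the same approach as the paper: apply Lemma~\lemref{atp-wcip} to the autotopism encoding the middle pseudoautomorphism, observe the first component fixes $1$, and read off the companion by evaluating at $1$. The paper proves only the forward direction in detail and leaves the converse as ``similar''; you spell out both directions and make explicit the generic autotopism identities $\chi = \phi R_{1\psi} = \psi L_{1\phi}$ that the paper uses implicitly, but there is no substantive difference in method.
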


\begin{proof}
Suppose $\sigma$ is a middle pseudoautomorphism with companion $c$ so that
$(\sigma R_{c\inv}\inv ,\sigma L_c\inv,\sigma)$ is an autotopism. By Lemma \lemref{atp-wcip}, $(J\sigma J, \sigma L_c\inv ,J \sigma R_{c\inv}\inv J)\in \Atp(Q)$. Since the first component fixes $1$, this autotopism lies in $\Atp_{\rho}(Q)$, and so the second and third components coincide and have the form $J\sigma J R_d$ for some $d$. To determine $d$, we compute $d = 1 J\sigma R_{c\inv}\inv J = c\inv$. Thus $(J \sigma J,J\sigma J R_c,J\sigma J R_c )\in \Atp_{\rho}(Q)$, that is, $\sigma$ is a right pseudoautomorphism with companion $c\inv$. The converse is similar.
\end{proof}

As an aside, we mention that a similar result holds for loops with the right inverse property: $\sigma$ is a middle pseudoautomorphism with companion $c$ if and only if $\sigma$ is a right pseudoautomorphism with companion $c$. In place of Lemma \lemref{atp-wcip}, the argument uses the fact that in RIP loops, $(\alpha,\beta,\gamma)\in \Atp(Q)$ implies $(\gamma,J\beta J,\alpha)\in \Atp(Q)$ \cite{Kiechle}.

As a corollary of Lemmas \lemref{nuclei} and \lemref{middle=right}, we re-obtain a fact from \cite{JS}.

\begin{corollary}
\corlabel{Nm=Nr}
In a loop $Q$ with WCIP, $N_{\mu}(Q) = N_{\rho}(Q)$.
\end{corollary}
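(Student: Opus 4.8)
The plan is to specialize the middle/right pseudoautomorphism correspondence of Lemma \lemref{middle=right} to the identity map $\iota$, and then read off the conclusion through the nuclear characterizations of Lemma \lemref{nuclei}. In other words, the corollary should drop out immediately once we observe that the inversion map relating the two types of pseudoautomorphism does nothing to $\iota$.

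Concretely, I would take $c\in N_{\mu}(Q)$. By Lemma \lemref{nuclei}, this is equivalent to saying that $\iota$ is a middle pseudoautomorphism with companion $c$. Applying Lemma \lemref{middle=right} with $\sigma = \iota$, this holds if and only if $J\iota J$ is a right pseudoautomorphism with companion $c\inv$. Since $Q$ has two-sided inverses, $J$ is an involution, so $J\iota J = \iota$, and we conclude that $\iota$ is a right pseudoautomorphism with companion $c\inv$. By Lemma \lemref{nuclei} again, this is precisely the statement $c\inv\in N_{\rho}(Q)$. Running the equivalences in both directions yields $c\in N_{\mu}(Q)$ if and only if $c\inv\in N_{\rho}(Q)$, that is, $N_{\mu}(Q) = \{b\inv\mid b\in N_{\rho}(Q)\}$.

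It then remains only to observe that $N_{\rho}(Q)$ is stable under the inversion map $J$. This follows from the remark after Lemma \lemref{nuclei} that the nuclei are subloops: being associative, $N_{\rho}(Q)$ is in fact a subgroup, and since inverses in $Q$ are two-sided and unique, the inverse of $b\in N_{\rho}(Q)$ computed inside the subgroup coincides with $b\inv$. Hence $b\inv\in N_{\rho}(Q)$ whenever $b\in N_{\rho}(Q)$, so $\{b\inv\mid b\in N_{\rho}(Q)\} = N_{\rho}(Q)$, giving $N_{\mu}(Q) = N_{\rho}(Q)$.

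I do not expect any genuine obstacle here, since essentially all the content has been absorbed into Lemma \lemref{middle=right}. The only two points that require any care are that $J$ is an involution, which is immediate from the existence of two-sided inverses, and that $N_{\rho}(Q)$ is closed under inversion, which is immediate once one knows the nuclei are subgroups; both are routine.
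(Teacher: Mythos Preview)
Your proposal is correct and follows precisely the route the paper intends: the paper presents this as an immediate corollary of Lemmas \lemref{nuclei} and \lemref{middle=right} without writing out any details, and your argument simply unpacks how those two lemmas combine (specialize Lemma \lemref{middle=right} to $\sigma=\iota$, use $J\iota J=\iota$, and invoke Lemma \lemref{nuclei} on both ends). The only step you add beyond what the paper makes explicit is the closure of $N_{\rho}(Q)$ under inversion, and your justification via the subloop remark is fine.
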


Our main result is the following.

\begin{theorem}
\thmlabel{main}
Let $Q$ be a WCIP loop, let $\sigma$ be a permutation of $Q$ and let $c\in Q$.
\begin{enumerate}
\item If $\sigma$ is a right pseudoautomorphism of $Q$ with companion $c$, then
$c\in N_{\lambda}(Q)$.
\item If $\sigma$ is a left pseudoautomorphism of $Q$ with companion $c$, then $c\inv$ is also a companion of $\sigma$ and $c^2 \in N_{\lambda}(Q)$.
\end{enumerate}
\end{theorem}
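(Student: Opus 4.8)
The plan is to translate everything into the language of autotopisms and exploit Lemma~\lemref{atp-wcip}, which in a WCIP loop sends an autotopism $(\alpha,\beta,\gamma)$ to $(J\gamma J,\beta,J\alpha J)$ \emph{with the middle component unchanged}. The point of preserving the middle component is that a suitable composite of the original autotopism with its image will have middle component $\iota$, hence will lie in $\Atp_{\lambda}(Q)$; evaluating the (common) outer component of that composite at $1$ will then exhibit an explicit left-nuclear element by the characterization in Lemma~\lemref{nuclei}. Throughout I will use the preliminary observation that a pseudoautomorphism fixes $1$ (setting $x=y=1$ in \eqnref{rightpseudo} or \eqnref{leftpseudo} and cancelling gives $1\sigma = 1$), together with the two-sided-inverse identities $x\ldiv 1 = 1\rdiv x = x\inv$.

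For part (1), a right pseudoautomorphism $\sigma$ with companion $c$ is exactly the autotopism $T_1 = (\sigma,\sigma R_c,\sigma R_c)\in\Atp_{\rho}(Q)$. Applying Lemma~\lemref{atp-wcip} produces $T_2 = (J\sigma R_c J,\sigma R_c,J\sigma J)\in\Atp(Q)$, which shares the middle component $\sigma R_c$ with $T_1$. Hence $T_1\inv T_2$ has middle component $\iota$ and so lies in $\Atp_{\lambda}(Q)$; by the characterization of $\Atp_{\lambda}(Q)$ its first and third components coincide and equal $\iota L_a$, where $a$ is the image of $1$ under the first component $\sigma\inv J\sigma R_c J$. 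A short evaluation using $1\sigma = 1$ gives $a = c\inv$, so $c\inv\in N_{\lambda}(Q)$ by Lemma~\lemref{nuclei}, and then $c\in N_{\lambda}(Q)$ since the left nucleus is a subloop and hence closed under inverses.

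For part (2), a left pseudoautomorphism $\sigma$ with companion $c$ is the autotopism $T = (\sigma L_c,\sigma,\sigma L_c)\in\Atp_{\lambda}(Q)$. Applying Lemma~\lemref{atp-wcip} gives $T' = (J\sigma L_c J,\sigma,J\sigma L_c J)\in\Atp(Q)$, which again has middle component $\sigma$ (so $T'\in\Atp_{\lambda}(Q)$) and first component equal to third. Evaluating its first component at $1$ yields $c\inv$, so the characterization of $\Atp_{\lambda}(Q)$ forces $T' = (\sigma L_{c\inv},\sigma,\sigma L_{c\inv})$; this is precisely the statement that $c\inv$ is also a companion of $\sigma$. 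Finally, since $T$ and $T'$ have the same middle component $\sigma$, the composite $T'\inv T$ has middle component $\iota$ and lies in $\Atp_{\lambda}(Q)$; its first component is $L_{c\inv}\inv L_c$, whose value at $1$ computes to $c^2$. Lemma~\lemref{nuclei} then gives $c^2\in N_{\lambda}(Q)$.

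The conceptual crux---and the step I expect to require the most care---is the recognition that Lemma~\lemref{atp-wcip} leaves the middle component fixed, since this is exactly what makes the ``difference'' autotopisms $T_1\inv T_2$ and $T'\inv T$ land in $\Atp_{\lambda}(Q)$ and thereby deliver nuclear elements. The remaining work is routine but bookkeeping-intensive: one must track the maps $J$, $R_c$, $L_c$ and their inverses through the composites and evaluate them at $1$ correctly, repeatedly invoking $1\sigma = 1$ and the inverse identities $x\ldiv 1 = 1\rdiv x = x\inv$. No genuinely hard estimate or construction is needed; the only real danger is a side- or sign-error in the permutation algebra.
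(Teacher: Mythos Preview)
Your proof is correct. For part (2) it is essentially identical to the paper's argument: both apply Lemma~\lemref{atp-wcip} to $(\sigma L_c,\sigma,\sigma L_c)$, read off the companion $c\inv$, and then form the composite $(\sigma L_{c\inv})\inv(\sigma L_c)$ to obtain $(L_{c^2},\iota,L_{c^2})\in\Atp(Q)$.

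For part (1), however, your route differs from the paper's. The paper proves (1) by a direct chain of element-level manipulations: starting from the right-pseudoautomorphism identity it derives eight successive equations (culminating in $cx\cdot y = c\cdot xy$), using (\WCIP) and ({\WCIP}2) explicitly at several steps. You instead reuse the autotopism machinery of part (2): form $T_1\inv T_2$ with middle component $\iota$, land in $\Atp_{\lambda}(Q)$, and read off $c\inv\in N_{\lambda}(Q)$. Your approach is more uniform---both parts become instances of the same ``match middle components and difference'' trick---and it replaces a somewhat opaque eight-step computation with one evaluation at $1$. The price is the small extra step of passing from $c\inv\in N_{\lambda}(Q)$ to $c\in N_{\lambda}(Q)$ via closure of the subloop under inverses, whereas the paper's computation lands directly on $c$. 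Both arguments ultimately rest on Lemma~\lemref{atp-wcip} (yours explicitly, the paper's implicitly through the WCIP identities), so there is no genuine difference in depth.
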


\begin{proof}
(1) Since $1 = yy\inv = y\cdot x(x\ldiv y\inv)$, we have
\[
c = 1\sigma\cdot c = y\sigma \cdot ((x(x\ldiv y\inv))\sigma \cdot c) =
y\sigma \cdot [x\sigma\cdot ((x\ldiv y\inv)\sigma \cdot c)]\,.
\]
Thus
\begin{equation}
\eqnlabel{wcip-tmp1}
x\sigma\ldiv (y\sigma \ldiv c) = (x\ldiv y\inv)\sigma \cdot c\,.
\end{equation}
Exchanging the roles of $x$ and $y$, we also have
\begin{equation}
\eqnlabel{wcip-tmp2}
y\sigma\ldiv (x\sigma \ldiv c) = (y\ldiv x\inv)\sigma \cdot c\,.
\end{equation}
By ({\WCIP}2), the right sides of \eqnref{wcip-tmp1} and \eqnref{wcip-tmp2} are equal, and so
\begin{equation}
\eqnlabel{wcip-tmp3}
x\sigma\ldiv (y\sigma \ldiv c) = y\sigma\ldiv (x\sigma \ldiv c)\,.
\end{equation}
Replacing $x$ with $x\sigma\inv$ and $y$ with $y\sigma\inv$ in \eqnref{wcip-tmp6}, we have $x\ldiv (y \ldiv c) = y\ldiv (x \ldiv c)$, and so
\begin{equation}
\eqnlabel{wcip-tmp4}
x (y\ldiv (x \ldiv c)) = y \ldiv c\,.
\end{equation}
Setting $x = c$ in \eqnref{wcip-tmp4}, we obtain
\begin{equation}
\eqnlabel{wcip-tmp5}
y\ldiv c = cy\inv\,.
\end{equation}
Using \eqnref{wcip-tmp5} in \eqnref{wcip-tmp4}, we have
\begin{equation}
\eqnlabel{wcip-tmp6}
x(y\ldiv (cx\inv)) = cy\inv\,.
\end{equation}
Taking $y = cx\inv$ in \eqnref{wcip-tmp6}, we get
\begin{equation}
\eqnlabel{wcip-tmp7}
c(cx\inv)\inv = x\,.
\end{equation}
Now in \eqnref{wcip-tmp6}, replace $x$ with $cx\inv$ and use \eqnref{wcip-tmp7} and ({\WCIP}2) to obtain
\begin{equation}
\eqnlabel{wcip-tmp8}
cx\inv\cdot (x\inv\ldiv y\inv) = cy\inv\,.
\end{equation}
Finally, in \eqnref{wcip-tmp8}, replace $x$ with $x\inv$ and $y$ with $y\inv$, and then replace $y$ with $xy$ to get
\[
cx\cdot y = c\cdot xy\,,
\]
which shows $c\in N_{\lambda}(Q)$, as claimed.

(2) Since $(\sigma L_c, \sigma,\sigma L_c)\in \Atp(Q)$, we have $(J\sigma L_c J,\sigma,J\sigma L_c J)\in \Atp(Q)$ by Lemma \lemref{wcip}. Since $1\sigma = 1$, this autotopism lies in $\Atp_{\lambda}(Q)$. Thus $J\sigma L_c J = \sigma L_d$ where $d = 1J\sigma L_c J = c\inv$. Hence $(\sigma L_{c\inv},\sigma,\sigma L_{c\inv})\in \Atp(Q)$, which shows that $\sigma$ has $c\inv$ as a companion. We have
\[
(L_{c\inv}\inv \sigma\inv,\sigma\inv,L_{c\inv}\inv \sigma\inv)(\sigma L_c,\sigma,\sigma L_c) = (L_{c\inv}\inv L_c, \iota, L_{c\inv}\inv L_c)\in \Atp(Q)\,.
\]
Therefore $L_{c\inv}\inv L_c = L_e$ where $e = 1L_{c\inv}\inv L_c = c^2$. Thus $(L_{c^2},\iota,L_{c^2})\in \Atp(Q)$, that is, $c^2\in N_{\lambda}(Q)$.
\end{proof}

\begin{corollary}
Let $Q$ be a WCIP loop with trivial left nucleus. Then every right pseudoautomorphism
is an automorphism. If, in addition, every element of $Q$ has a unique square root, then
every left pseudoautomorphism is an automorphism.
\end{corollary}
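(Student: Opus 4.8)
The plan is to obtain both statements directly from Theorem \thmref{main}, using the observation (recorded just before Lemma \lemref{ps-aut}) that a permutation is an automorphism exactly when it is a pseudoautomorphism with companion $1$. The role of the trivial left nucleus will be to force the relevant companion down to $1$.

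First I would handle the right case. Suppose $\sigma$ is a right pseudoautomorphism with companion $c$. Part (1) of Theorem \thmref{main} yields $c\in N_{\lambda}(Q)$, and since $N_{\lambda}(Q)=\{1\}$ by hypothesis, this forces $c=1$. A pseudoautomorphism with companion $1$ is an automorphism, so $\sigma$ is an automorphism.

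Next I would turn to the left case. Suppose $\sigma$ is a left pseudoautomorphism with companion $c$. Part (2) of Theorem \thmref{main} gives $c^2\in N_{\lambda}(Q)=\{1\}$, so $c^2=1$. At this stage one cannot yet conclude $c=1$, since a priori $c$ might be a nontrivial involution; this is precisely the point at which the additional hypothesis is needed. The assumption that every element of $Q$ has a unique square root says exactly that the squaring map is injective, and from $c^2=1=1^2$ we then obtain $c=1$, so again $\sigma$ is an automorphism.

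I expect the only delicate step to be this passage from $c^2=1$ to $c=1$ in the left case: it is the unique-square-root hypothesis that excludes an involutory companion and thereby separates the left case from the right. Everything else is a direct appeal to Theorem \thmref{main}, so no additional computation should be required.
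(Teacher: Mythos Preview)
Your proposal is correct and is exactly the argument the paper has in mind; indeed, the paper states the corollary without proof precisely because it follows immediately from Theorem \thmref{main} in the way you describe.
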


\begin{example}
The \emph{relativistic Bruck loop} (or relativistic gyrocommutative gyrogroup) is the set of relativistic velocity vectors with Einstein's velocity addition as the operation \cite{Ungar}. This is isomorphic to the natural Bruck loop structure on the set of positive definite symmetric Lorentz transformations \cite[Ch. 10]{Kiechle}. The left nucleus is trivial, because it is precisely the set of fixed points of the action of the special orthogonal group. In addition, every element of the loop has a unique square root. Thus we obtain: \emph{In the relativistic Bruck loop, every pseudoautomorphism is an automorphism}.
\end{example}

Finally, we generalize a well-known result of Bruck \cite{BrPs}, who proved the following for commutative Moufang loops.

\begin{corollary}
Every pseudoautomorphism of a commutative, inverse property loop is an automorphism.
\end{corollary}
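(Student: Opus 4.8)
The plan is to reduce all three kinds of pseudoautomorphism to the right case, which is already controlled by Theorem~\thmref{main}(1). First I would record that a commutative IP loop is a WCIP loop: the right inverse property is part of the inverse property, and since IP loops satisfy $(xy)\inv = y\inv x\inv$, commutativity upgrades this to the AIP $(xy)\inv = x\inv y\inv$; as noted in the text, RIP and AIP together force WCIP. The AIP says moreover that the inversion map $J$ is itself an automorphism of $Q$, a fact I will need for the middle case.

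Next I would show that in any commutative loop a left pseudoautomorphism is the same thing as a right pseudoautomorphism, with the same companion. Starting from \eqnref{leftpseudo} in the form $c\cdot(xy)\sigma = (c\cdot x\sigma)\cdot y\sigma$, I would push $c$ to the right and rebracket using commutativity to obtain $(xy)\sigma\cdot c = y\sigma\cdot(x\sigma\cdot c)$; swapping the roles of $x$ and $y$ and using $(yx)\sigma=(xy)\sigma$ turns this into $(xy)\sigma\cdot c = x\sigma\cdot(y\sigma\cdot c)$, which is exactly \eqnref{rightpseudo}. Hence a left pseudoautomorphism $\sigma$ with companion $c$ is a right pseudoautomorphism with companion $c$. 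Reading $\sigma$ as a right pseudoautomorphism, Theorem~\thmref{main}(1) gives $c\in N_{\lambda}(Q)$; reading it as a left pseudoautomorphism, Lemma~\lemref{ps-aut} then gives that $\sigma$ is an automorphism. This simultaneously disposes of the left and right cases.

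For a middle pseudoautomorphism $\sigma$ with companion $c$, I would invoke Lemma~\lemref{middle=right}: the map $J\sigma J$ is a right pseudoautomorphism with companion $c\inv$, hence an automorphism by the previous paragraph. Since $J$ is an automorphism and $J^2=\iota$, the identity $\sigma = J(J\sigma J)J$ exhibits $\sigma$ as a composite of automorphisms, so $\sigma$ is an automorphism as well.

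The one genuinely delicate point is the left case. Theorem~\thmref{main}(2) on its own yields only $c^2\in N_{\lambda}(Q)$, which need not force $c\in N_{\lambda}(Q)$, since a nucleus of an IP loop need not be closed under square roots. The real content is therefore the commutativity computation showing that a left pseudoautomorphism is already a right one; once that is in place, Theorem~\thmref{main}(1) supplies $c\in N_{\lambda}(Q)$ directly and the remaining steps are routine.
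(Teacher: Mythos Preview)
Your argument is correct; the reduction of left to right pseudoautomorphisms via commutativity is valid, and your treatment of the middle case through Lemma~\lemref{middle=right} together with $J\in\Aut(Q)$ is clean.

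The paper's route differs. Rather than converting left pseudoautomorphisms into right ones, the paper invokes Theorem~\thmref{main} \emph{and its left/right dual}. A commutative IP loop satisfies not only WCIP but also its mirror image $x(xy)\inv = y\inv$, so the dual of Theorem~\thmref{main}(1) applies directly to a left pseudoautomorphism and places its companion in $N_{\rho}(Q)$. Since all nuclei coincide in an IP loop, Lemma~\lemref{ps-aut} finishes each case in one stroke. Your approach trades this appeal to an unstated dual theorem for an explicit commutativity calculation, which makes the argument self-contained within what the paper actually proves; the paper's approach is terser but relies on the reader supplying the dual statement. You are also right to flag that Theorem~\thmref{main}(2) alone, giving only $c^2\in N_{\lambda}(Q)$, would not suffice for the left case---this is precisely why both the paper and you need something beyond it.
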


\begin{proof}
In an inverse property loop, all nuclei coincide, so by Theorem \thmref{main} and its left/right dual, the companion of any pseudoautomorphism lies in the nucleus of $Q$. By Lemma \lemref{ps-aut}, we have the desired result.
\end{proof}

\begin{acknowledgment}
Our investigations were aided by the automated deduction tool \textsc{Prover9} developed by McCune \cite{McCune}. The problem of the existence of pseudoautomorphisms of the relativistic Bruck loop which are not automorphisms was suggested to the second author several years ago by Anton Greil.
\end{acknowledgment}

\end{document}